\newtheorem{theorem}{Theorem}[section] 
\newtheorem{lemma}[theorem]{Lemma}
\newtheorem{prop}[theorem]{Proposition}
\newtheorem{remark}[theorem]{Remark}
\numberwithin{equation}{section}
\newcommand{\eps}{\varepsilon}
\newcommand{\x}{\mathbf{x}}
\newcommand{\z}{\mathbf{z}}
\newcommand{\e}{\mathbf{e}}
\newcommand{\bu}{\mathbf{u}}
\newcommand{\bk}{\mathbf{k}}
\newcommand{\y}{\mathbf{y}}
\newcommand{\w}{\mathbf{w}}
\newcommand{\0}{\mathbf{0}}
\newcommand{\I}{\mathbb{I}}
\newcommand{\R}{\mathbb{R}}
\newcommand{\Z}{\mathbb{Z}}
\begin{document}

\title[Radius of Elastic Manifolds]{Self-Repelling Elastic Manifolds with Low Dimensional Range}
\author{Carl Mueller}
\address{Carl Mueller
\\Department of Mathematics
\\University of Rochester
\\Rochester, NY  14627}
\email{carl.e.mueller@rochester.edu}
\author{Eyal Neuman}
\address{Eyal Neuman
\\Department of Mathematics
\\Imperial College
\\London, UK}
\email{e.neumann@imperial.ac.uk}
\thanks{CM was partially supported by Simons Foundation grant 513424.} 
\keywords{Gaussian free field, self-avoiding, elastic manifold}
\subjclass[2020]{Primary, 60G60; Secondary,  60G15.}

\begin{abstract} 
 We consider self-repelling elastic manifolds with a domain 
$[-N,N]^d \cap \mathbb{Z}^d$, that take values in $\mathbb{R}^D$. Our main result states that when the dimension of the domain is $d=2$ and the dimension of the range is $D=1$, the effective radius $R_N$ of the manifold is approximately $N^{4/3}$. This verifies the conjecture of Kantor, Kardar and Nelson \cite{Kardar_87}. 
Our results for the case where $d \geq 3$ and $D <d$ give a lower bound on 
$R_N$ of order $N^{\frac{1}{D} \left(d-\frac{2(d-D)}{D+2} \right)}$ and an 
upper bound proportional to $N^{\frac{d}{2}+\frac{d-D}{D+2}}$. These results 
imply that self-repelling elastic manifolds with a low dimensional range 
undergo a significantly stronger stretching than in the case where 
$d=D$, which was studied in \cite{muel-neum-2021}.   
\end{abstract}

\maketitle

\section{Introduction} \label{section:introduction} Self-repelling elastic 
manifolds were first introduced by Kantor, Kardar and Nelson in \cite{Kardar_86} as generalizations of 
polymer models to higher dimensions, in order to capture the behaviour of sheets of covalently bonded atoms and of polymerized lipid surfaces, among others. See \cite{Kardar_86,Kardar_87,Kantor_87,Nelson_2004} and references therein for additional details. 
  In the mathematical literature this model was first studied in \cite{muel-neum-2021} as a random surface with free boundary conditions is 
modeled by $\mathbb{R}^D$-valued discrete Gaussian free field (DGFF) over 
$[-N,N]^d \cap \mathbb{Z}^d$, with Neumann boundary conditions. A 
penalization term for self-intersections, which reflects the fact that 
different parts of the manifold cannot occupy the same position, is then 
added to the Hamiltonian of the DGFF. 
If the domain of DGFF is one dimensional, then we recover the well-known 
model of a random polymer.  A typical object of study is the end-to-end 
distance of such a polymer, or the closely related concept of effective 
radius.  There is a vast literature on such problems, see Bauerschmidt, 
Duminil-Copin, Goodman, and Slade \cite{BDGS12} and the included citations.  

Bounds on the effective radius $R_N$ of the self-repelling manifold were 
derived in \cite{muel-neum-2021} for the case where $d=D$. It was proved 
that for the two dimensional case, that is $d=D=2$, $R_N$ is proportional to 
$N$ in the upper and lower bounds, up to a logarithmic correction. The 
bounds on $R_N$ in higher dimensions are not as sharp, with a lower bound 
proportional to $N$, but with an upper bound of order $N^{d/2}$. The results 
of \cite{muel-neum-2021} proved however that self-repelling elastic 
manifolds experience a substantial stretching in any dimension.  

We should also mention the related paper \cite{muel-neum-2021a} on the 
effective radius for moving polymers.  Most methods available in the polymer literature 
fail in these more general settings, but we did take inspiration from work 
of Bolthausen \cite{Bol90}.

In this paper, we deal with the case where $D<d$. We first prove that when 
the dimension of the domain is $d=2$ and the dimension of the range is $D=1$, 
the effective radius $R_N$ of the manifold is approximately $N^{4/3}$. Our 
results for the case $d \geq 3$ and $D <d$ give a lower bound on $R_N$ of 
order $N^{\frac{1}{D} \left(d-\frac{2(d-D)}{D+2} \right)}$ and an upper 
bound proportional to $N^{\frac{d}{2}+\frac{d-D}{D+2}}$. These results imply 
that self-repelling elastic manifolds with a low dimensional range undergo a 
significantly stronger stretching than in the case $d=D$, which was 
studied in \cite{muel-neum-2021}.   

The remaining case, $D>d$, looks to be much harder. For example, consider the case where the domain of the self-repelling DGFF is 
$\{0,\dots,N\}$ and it takes values in $\R^D$.  For $D=2,3,4$ the behavior 
of the effective radius of the self-repelling polymer as $N\to\infty$ is 
still unknown,  although we have good information for $D=1$ and for $D>4$.  
See page 400 of Bauerschmidt, Duminil-Copin, Goodman, and Slade \cite{BDGS12} 
and also Bauerschmidt, Slade, and Tomberg, and Wallace \cite{BSTW17}.  
If $D$ is large enough, then for self-avoiding walks, the lace 
expansion can be used.  For DGFF however, there appears to be no analogue of the lace 
expansion.  
 \section{Model Setup and Main Results} 
 \subsection{Setup}
 We briefly review some of the definitions and notation from Section 1 of \cite{muel-neum-2021} which are essential for our context. In the following, ordinary letters such as $x,u$ take values in 
$\R$ or $\Z$, while boldface letters such as $\x,\bu$ take 
values in $\R^d$ for $d\ge2$.  

Fix $d\ge2, N\ge1$ and define our parameter set as follows:
\begin{equation*}
S^d_{N}:=[-N,N]^d \cap \Z^d. 
\end{equation*}
Note that
\begin{equation*}
S^1_{N}:=\{-N,\dots,N\}.
\end{equation*}
Thus $S^d_N$ is a cube in $\Z^d$ centered at the origin.  

We denote by $\Delta=\Delta_{N,d,D}$  the discrete Neumann Laplacian on 
$S^d_{N}$ (see Section 1 of \cite{muel-neum-2021} for the precise 
definition). In the case $D=1$, since $\Delta$ is a self-adjoint operator on 
a finite-dimensional space, there exists a finite index set $\I=\I_{N,d}$ to 
be defined later, and an orthonormal basis of eigenfunctions 
$(\varphi_\bk)_{\bk\in\I}$ with corresponding eigenvalues 
$(\lambda_\bk)_{\bk\in\I}$.  We can assume without loss of generality that there is a 
distinguished index $\0$ such that $\varphi_\0$ is constant and that 
$\lambda_\0=0$.  

Throughout, we fix a parameter $\beta>0$, which has a physical 
interpretation as the inverse temperature.  
Let $(X^{(i)}_\bk)_{\bk\in\I\setminus\{\0\},i=1,\dots,D}$ be a collection of 
i.i.d. random variables defined on a probability space 
$(\Omega,\mathcal{F},\mathbb{P})$ such that
\begin{equation*}
X^{(i)}_\bk \sim N(0,(2\beta\lambda_\bk)^{-1}).
\end{equation*}
For each $i=1,\dots,D$ define
\begin{equation} \label{u-eigen}
u^{(i)}=\sum_{\bk\in\I\setminus\0}X^{(i)}_\bk\varphi_\bk,  \\
\end{equation}
and define DGFF as  
\begin{equation} \label{DGFF-def}
\bu=(u^{(1)},\dots,u^{(D)}).
\end{equation}
As explained in \cite{muel-neum-2021}, this corresponds to the Gibbs 
measure with Hamiltonian $H(\bu)=\sum_{\x\sim \y}|\bu(\x)-\bu(\y)|^2$, where 
$\x\sim \y$ means that $\x,\y$ should be nearest neighbors on $S_N^d$. In other 
words, the energy $H(\bu)$ depends on the stretching $|\bu(\x)-\bu(\y)|$.

We recall the definition the local time of the field $\bu$ at level 
$\z \in \mathbb R^{D}$ as 
\begin{equation} \label{l-time}
\begin{split} 
\ell_{N}(\z) &= \#\{\x \in S^d_{N} 
       : \bu(\x) \in [\z-\mathbf{1/2}, \z+\mathbf{1/2}] \} \\
&= \sum_{\x \in S_N^d} \mathbf{1}_{\{\bu(\x) \in [\z-\mathbf{1/2}, \z+\mathbf{1/2}] \}}, 
\end{split} 
\end{equation} 
where $\mathbf{\frac{1}{2}}=(1/2,\dots,1/2)\in\mathbb{R}^D$, 
and $[\x,\y]=\prod_i[x_i,y_i]$ for $\x,\y\in\mathbb{R}^D$.

Now we define a weakly self-avoiding Gaussian free field. 
Throughout, we fix a parameter $\gamma>0$.  
If $P_{N,d,D,\beta}$ denotes the 
original probability measure of $(\bu(\x))_{\x\in S_N^d}$, 
we define the probability $Q_{N,d,D,\beta,\gamma}$ as follows.  
For ease of notation, we write $E$ for the expectation with respect to 
$P_{N,d,D,\beta}$.
Let
\begin{equation} \label{z-func} 
\begin{aligned}
\mathcal{E}_{N,d,D,\gamma}
&=\exp\left(-\gamma \int_{\mathbb R^D}\ell_N(\y)^2d\y \right),\\
Z_{N,d,D,\beta,\gamma}&=E[\mathcal{E}_{N,d,D,\gamma}]
 =E^{P_{N,d,D,\beta}}[\mathcal{E}_{N,d,D,\gamma}].
\end{aligned}
\end{equation} 
Then we define for any set $A\in\mathcal{F}$,
\begin{equation} \label{eq:def-Q}
Q_{N,d,D,\beta,\gamma}(A)=\frac{1}{Z_{N,d,D,\beta,\gamma}}E
 \big[\mathcal{E}_{N,d,D,\gamma}\mathbf{1}_A\big].
\end{equation}
For ease of notation, we will often drop the subscripts except for 
$N$ and write
\begin{equation*}
P_N=P_{N,d,D,\beta} , \quad Q_N=Q_{N,d,D,\beta,\gamma}  , \quad 
\mathcal{E}_N=\mathcal{E}_{N,d,D,\gamma}. 
\end{equation*}
For $Z_{N,d,D,\beta,\gamma}$ in \eqref{z-func} we often write,  
$$
Z_{N,d,D}=Z_{N,d,D,\beta,\gamma}.
$$

Finally, we define the effective radius of the field $\bu$ as   
\begin{equation*}
R_{N,d,D}=  \max_{\w, \z\in S^d_N}  \|\bu(\z)-\bu(\w)\|, 
\end{equation*}
where $\|\cdot\|$ denotes the Euclidian norm.

\subsection{Statement of the main result}

Note that in our main theorem below, we assume that $D \leq d$.   
 
\begin{theorem}
\label{th:main}
Let $\bu$ be the weakly self-avoiding DGFF on $S_N^d$ taking values in 
$\mathbb{R}^D$.  There are constants $\varepsilon_0,K_0>0$ not depending on 
$\beta,\gamma,N$ such that  
\begin{itemize} 
\item[\textbf{(i)}] For $d=2$ and $D=1$, 
\begin{align*}
\lim_{N\to\infty} Q_{N}\Big[\varepsilon_0  \frac{\gamma}{\beta+\gamma}   N^{4/3}(\log N)&^{-2/3}
\leq R_{N,d,D} \\
 &\leq K_{0} \Big(\frac{\beta+\gamma}{\beta}\Big)^{1/2}  N^{4/3}(\log N)^{4/3}\Big]  =1.
\end{align*}
\item[\textbf{(ii)}] For $d\geq 3$ and $1\leq D \leq d$, 
\begin{align*}
\lim_{N\to\infty} Q_{N}\Big[\varepsilon_0  \Big(\frac{\gamma}{\beta+\gamma}\Big)^{1/D}N^{\frac{1}{D} \left(d-\frac{2(d-D)}{D+2} \right)} &
 \leq R_{N,d,D} \\
& \leq K_{0}\Big(\frac{\beta+\gamma}{\beta}\Big)^{1/2}N^{\frac{d}{2}+\frac{d-D}{D+2}}  \Big] =1.
\end{align*}
\end{itemize} 
\end{theorem}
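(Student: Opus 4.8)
The plan is to reduce both inequalities to a single lower bound on the partition function $Z_{N,d,D}$, in the change‑of‑measure spirit of Bolthausen used in \cite{muel-neum-2021}. Write $\alpha=\tfrac{dD+4d-2D}{D+2}$, so that the claimed exponents are $\tfrac1D\big(d-\tfrac{2(d-D)}{D+2}\big)=\tfrac{d+2}{D+2}$ (lower bound) and $\tfrac\alpha2=\tfrac d2+\tfrac{d-D}{D+2}$ (upper bound). For the lower bound on $R_{N,d,D}$, note that on $\{R_{N,d,D}<\rho\}$ the image $\{\bu(\x):\x\in S^d_N\}$ has diameter at most $\rho$, so $\ell_N$ is supported on a set of volume $O(\rho^D)$; since $\int_{\R^D}\ell_N(\y)\,d\y=|S^d_N|\asymp N^d$, Cauchy--Schwarz gives $\int_{\R^D}\ell_N(\y)^2\,d\y\ge cN^{2d}\rho^{-D}$ on that event, hence $\mathcal E_N\le\exp(-c\gamma N^{2d}\rho^{-D})$ and
\begin{equation*}
Q_N(R_{N,d,D}<\rho)=\frac{E[\mathcal E_N\mathbf 1_{\{R_{N,d,D}<\rho\}}]}{Z_{N,d,D}}\le\frac{\exp(-c\gamma N^{2d}\rho^{-D})}{Z_{N,d,D}}.
\end{equation*}
For the upper bound I would use only $\mathcal E_N\le1$, so $E[\mathcal E_N\mathbf 1_{\{R_{N,d,D}>\rho\}}]\le P_N(R_{N,d,D}>\rho)$, and bound the latter by a Gaussian tail estimate: each $u^{(i)}(\x)$ is centered Gaussian with variance $\asymp v_N/\beta$, where $v_N\asymp\log N$ if $d=2$ and $v_N\asymp1$ if $d\ge3$, so $P_N(R_{N,d,D}>\rho)\le CN^d\exp(-c\beta\rho^2/v_N)$ and therefore $Q_N(R_{N,d,D}>\rho)\le CN^d\exp(-c\beta\rho^2/v_N)/Z_{N,d,D}$. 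Everything then comes down to showing $-\log Z_{N,d,D}\le C(\beta+\gamma)N^{\alpha}(\log N)^{O(1)}$, with no logarithm when $d\ge3$.

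To prove this bound on $Z_{N,d,D}$ I would use the Gibbs variational inequality: for any probability measure $\widetilde Q\ll P_N$,
\begin{equation*}
\log Z_{N,d,D}=\log E^{P_N}[\mathcal E_N]\ \ge\ -\gamma\,E^{\widetilde Q}\Big[\int_{\R^D}\ell_N(\y)^2\,d\y\Big]-\mathcal H(\widetilde Q\,|\,P_N),
\end{equation*}
and take $\widetilde Q$ to be the law under $P_N$ of $\bu+\bv$ for a well‑chosen deterministic field $\bv:S^d_N\to\R^D$. By the Cameron--Martin formula $\mathcal H(\widetilde Q\,|\,P_N)=\beta H(\bv)$, so the right side equals $-\gamma E^{P_N}\big[\int(\ell_N^{\bu+\bv})^2\big]-\beta H(\bv)$. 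The field $\bv$ is designed to stretch $S^d_N$ onto a box of side $\asymp R$ in $D$ of the coordinate directions with $|\bv(\x)-\bv(\y)|\lesssim R/N$ for neighbours $\x\sim\y$ (so that $H(\bv)\lesssim N^{d-2}R^2$), and — crucially — so that its image measure does not concentrate. When $D<d$ the naive linear ramp places the image on a sparse grid of spacing $R/N\gg1$ and produces $\int(\ell_N^{\bu+\bv})^2\asymp N^{2d-D}$, which only yields $R_{N,d,D}\gtrsim N$; to do better one must add a further deterministic correction (linear ramps with incommensurate slopes, or oscillatory "filling" terms built from the extra $d-D$ coordinates) of spatial scale $\asymp R/N$ that bridges these gaps without exceeding the elastic budget, and then prove the key geometric estimate $E^{P_N}\big[\int(\ell_N^{\bu+\bv})^2\big]\lesssim N^{2d}R^{-D}$ (up to logarithms when $d=2$). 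Optimizing $\gamma N^{2d}R^{-D}+\beta N^{d-2}R^2$ over $R$, which is minimized at $R\asymp(\gamma/\beta)^{1/(D+2)}N^{(d+2)/(D+2)}$ with value of order $(\gamma^2\beta^D)^{1/(D+2)}N^{\alpha}\le(\beta+\gamma)N^{\alpha}$, then gives the required bound on $-\log Z_{N,d,D}$.

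Combining the pieces, $Q_N(R_{N,d,D}<\rho)\le\exp\big(-c\gamma N^{2d}\rho^{-D}+C(\beta+\gamma)N^{\alpha}(\log N)^{O(1)}\big)\to0$ once $\rho=\eps_0\big(\tfrac{\gamma}{\beta+\gamma}\big)^{1/D}N^{(d+2)/(D+2)}$ (times $(\log N)^{-2/3}$ when $d=2$) with $\eps_0$ small enough, and $Q_N(R_{N,d,D}>\rho)\le CN^d\exp\big(-c\beta\rho^2/v_N+C(\beta+\gamma)N^{\alpha}(\log N)^{O(1)}\big)\to0$ once $\rho=K_0\big(\tfrac{\beta+\gamma}{\beta}\big)^{1/2}N^{\alpha/2}$ (times a power of $\log N$ when $d=2$) with $K_0$ large enough; for $d=2$, $D=1$ one has $\alpha/2=(d+2)/(D+2)=4/3$, so the two bounds match up to logarithms. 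I expect the main obstacle to be the geometric estimate $E^{P_N}\big[\int(\ell_N^{\bu+\bv})^2\big]\lesssim N^{2d}R^{-D}$: one must engineer $\bv$ so that $H(\bv)$ stays within the elastic budget $N^{d-2}R^2$ while guaranteeing that no region of the range carries local time far above the average $N^d/R^D$. In dimension $d\ge3$ the intrinsic fluctuations of $\bu$ are of order $1$, negligible against the ramp spacing $R/N\gg1$, and this can be pushed through cleanly; in dimension $d=2$ they are of order $\log N$, comparable to the relevant scales, and the construction of $\bv$ together with the control of $\ell_N^{\bu+\bv}$ has to be carried out with care — this is the source of the logarithmic corrections in part \textbf{(i)}. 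It is also why in part \textbf{(ii)} only the lossy bound $\mathcal E_N\le1$ is available in the upper‑bound direction, leaving a polynomial gap between $N^{(d+2)/(D+2)}$ and $N^{\alpha/2}$ when $d>2$.
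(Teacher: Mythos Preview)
Your overall architecture coincides with the paper's: a lower bound on $Z_{N,d,D}$ via Jensen's inequality under a deterministically shifted measure, the Cauchy--Schwarz bound $\int\ell_N^2\ge cN^{2d}\rho^{-D}$ on $\{R_{N,d,D}<\rho\}$ for the lower tail, and $\mathcal E_N\le1$ together with a Gaussian maximum estimate for the upper tail. Sections~3, 6 and~7 of the paper carry out exactly these three steps, so your proposal is not a different route but the same one.

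The one substantive divergence is the choice of shift $\bv$. The paper takes the \emph{plain linear ramp} $\bv(\x)=a(x_1,\dots,x_D)$ and, through the density estimate in its Section~4, obtains $\hat E^{(a)}\big[\int\ell_N^2\big]\lesssim N^{2d-D}(\beta^{-1/2}a^{-1})^D$; optimizing in $a$ then gives the partition-function bound directly, with no filling construction. You instead argue that the plain ramp cannot beat $N^{2d-D}$, and your objection has real force: for the $\asymp N^{2d-D}$ pairs $(\z,\w)\in (S_N^d)^2$ that agree in coordinates $1,\dots,D$ but differ in one of the last $d-D$, the shifted increment $\bu_a(\z)-\bu_a(\w)$ has mean $0$ and variance $\asymp\beta^{-1}$ \emph{irrespective of $a$}, so these pairs alone contribute $\gtrsim N^{2d-D}\min(1,\beta^{D/2})$ to $\hat E^{(a)}[\int\ell_N^2]$. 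In the paper's display~(4.5) the exponent reads $a^2(z_i-w_i-y_i)^2/(2C_2\beta^{-1})$, whereas the Gaussian law of $u^{(i)}(\z)-u^{(i)}(\w)$ under $\hat P^{(a)}$ has exponent $(y_i-a(z_i-w_i))^2/(2\sigma^2)$; the extra factor $a^2$ is what makes the subsequent bound decay in $a$. So the device you propose---using the remaining $d-D$ coordinates to break the $N^{d-D}$-fold stacking, for instance via $\bv^{(i)}(\x)=\sum_{j=1}^{d}a_{ij}x_j$ with slopes of size $R/N$---appears to be genuinely required rather than optional. Carrying it out, i.e.\ exhibiting such a $\bv$ with $H(\bv)\lesssim N^{d-2}R^2$ and proving $E\big[\int(\ell_N^{\bu+\bv})^2\big]\lesssim N^{2d}R^{-D}$, is the gap that remains in your proposal; the incommensurate-slope idea is the natural route, but you will need the slope ratios to be badly approximable (or replace the linear map by a different space-filling scheme) in order to control the number of pairs with $\|\bv(\z)-\bv(\w)\|\le1$.
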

 \begin{remark}
We compare the result of Theorem \ref{th:main}(i) to the result of Theorem 
1.1(i) in \cite{muel-neum-2021}, where it was proved that  
$R_{N,2,2} \approx N$. Note that by reducing the dimension of the range by 
1, the manifold stretches, with a substantially larger radius of
$R_{N,2,1} \approx N^{4/3}$. By comparing  Theorem \ref{th:main}(ii) and 
Theorem 1.1(ii) in \cite{muel-neum-2021} we observe a similar phenomenon for 
$d\geq3$, as the lower bound on the radius increases from 
$R_{N,d,d} \gtrsim N$ to 
$R_{N,d,D} \gtrsim N^{ \frac{1}{D} \left(d-\frac{2(d-D)}{D+2} \right)}$ for 
$D<d$. This additional stretching for manifolds of lower dimensional range 
can be predicted by considering the local time expression in \eqref{l-time}. 
Indeed for a fixed value of $d$ and when the range has dimension $D<d$, 
$\ell_{N}(\z)$ counts the same number of vertices $(2N+1)^d$, but if the 
radius $R_N$ remains the same, there is less space to fit these vertices.  
Hence we would expect $\mathcal{E}_N$ to be larger. This in turn gives the 
repelling term in \eqref{z-func} a stronger influence on the configuration 
of the manifold (see e.g. \eqref{l-lower} in Section \ref{sec-small}).   
\end{remark} 

 \begin{remark} 
 Theorem \ref{th:main} verifies the conjecture by Kantor, Kardar and Nelson in \cite{Kardar_87} for the case where $d=2$ and $D=1$. Although in the model that was presented in \cite{Kardar_87} the DGFF is defined on the triangular lattice, the heuristics that yields their result is based on Flory's argument which also applies for the rectangular lattice.   
 \end{remark} 
 
\subsection{Outline of the proof}
We describe the outline for the case $d=2$, $D=1$, as the proof for $d\geq 3$, $D <d$ follows similar lines. Define the following events.  
\begin{equation} \label{events}
\begin{aligned} 
A^{(1)}_{N,d,D}&=\left\{R_{N,d,D}>K_{0} \Big(\frac{\beta+\gamma}{\beta}\Big)^{1/2}  N^{4/3}(\log N)^{4/3} \right\}, \\
A^{(2)}_{N,d,D}&=\left \{R_{N,d,D}<\varepsilon_0  \frac{\gamma}{\beta+\gamma}   N^{4/3}(\log N)^{-2/3} \right \}. 
\end{aligned}
\end{equation}
It suffices to show that for $i=1,2$ we have
\begin{equation*}
\lim_{N\to\infty}Q_{N}\big(A^{(i)}_{N,d,D}\big)=0 .
\end{equation*}

From \eqref{eq:def-Q} we see that it is enough to find:
\begin{enumerate}
\item a lower bound on $Z_{N,d,D}$, derived in Section \ref{sec-part},
\item and an upper bound on 
$E \big[\mathcal{E}_{N,d,D}\mathbf{1}_{A^{(i)}_{N,d,D}}\big]$ 
for $i=1,2$, obtained in Sections \ref{sec-large-t} and \ref{sec-small}, respectively. 
\end{enumerate}
Finally, the upper bounds divided by the lower bound should vanish 
as $N\to\infty$.  

\section{ Lower Bound on the Partition Function}  \label{sec-part} 
In this section we derive the following lower bound on $ Z_{N,d,D}$.  
\begin{prop} \label{prop-z} 
Let $\beta>0$. Then there exists a constant $C>0$ not depending on $N$, $\beta$ and $\gamma$ such that  
\begin{itemize} 
\item[\textbf{(i)}] for $d=2$ and $D=1$, 
\begin{equation*} \label{z-bnd-fin} 
\log  Z_{N,d,D} \geq  -C(\beta+\gamma) N^{8/3}(\log N)^{2/3}. 
\end{equation*}
\item [\textbf{(ii)}] for $d\geq 3$ and $D\leq d$, 
 \begin{equation*} \label{z-bnd-fin-3} 
\log  Z_{N,d,D} \geq  -C(\beta+\gamma) N^{d+\frac{2(d-D)}{d-D+2}}. 
\end{equation*} 
\end{itemize} 
\end{prop}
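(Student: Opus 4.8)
The plan is to bound $Z_{N,d,D}$ from below by restricting the expectation $E[\mathcal{E}_{N,d,D}]$ to a single well-chosen event and estimating its $P_N$-probability together with the value of $\mathcal{E}_{N,d,D}$ there. Concretely, fix a target scale $L=L_N$ (to be optimized; for $d=2,D=1$ the right choice should be of order $N^{4/3}(\log N)^{?}$ — more precisely one should aim for $\mathcal{E}_N$ of order $\exp(-c\gamma N^{2d}/L^D)$ on the event, matched against a Gaussian cost of order $\exp(-c\beta L^2/\text{something})$). Consider the event $G_N$ that the field $\bu$ has all its values spread out in a box of side roughly $L$ in $\R^D$ in such a way that the local time satisfies $\ell_N(\z)\lesssim (2N+1)^d/L^D$ uniformly, so that $\int \ell_N(\y)^2\,d\y \lesssim (2N+1)^{2d}/L^D$ and hence $\mathcal{E}_N \geq \exp(-C\gamma N^{2d}/L^D)$ on $G_N$. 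Then
\begin{equation*}
Z_{N,d,D} = E[\mathcal{E}_{N,d,D}] \geq E[\mathcal{E}_{N,d,D}\mathbf 1_{G_N}] \geq \exp\!\big(-C\gamma N^{2d}/L^D\big)\, P_N(G_N),
\end{equation*}
and it remains to lower-bound $P_N(G_N)$ by something like $\exp(-C\beta N^{\text{(power)}})$, after which one optimizes over $L$.

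The key step is producing a good lower bound on $P_N(G_N)$ — i.e., showing it is not smaller than $\exp(-C\beta \cdot(\text{cost}))$. The natural route is a change-of-measure / Cameron–Martin argument: rather than asking the unshifted DGFF to do something atypical, one tilts $\bu$ by a deterministic profile $\bu_0$ that already spreads the values over the box of side $L$ (for instance a rescaled low-frequency eigenfunction $\varphi_\bk$ with $\lambda_\bk \asymp N^{-2}$, amplified to amplitude $\asymp L$, in the first coordinate and zero in the others), and one estimates the Radon–Nikodym cost $\exp(-\beta\,\mathcal{H}(\bu_0) + \text{cross terms})$ of this shift, where $\mathcal H(\bu_0)=\sum_{\x\sim\y}|\bu_0(\x)-\bu_0(\y)|^2 \asymp \lambda_\bk \|\bu_0\|^2 \asymp N^{-2}\cdot L^2 N^d$. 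Alternatively, and perhaps more robustly, one fixes a sub-grid of $S_N^d$ of spacing $\asymp N/L^{?}$ and directly estimates the Gaussian probability that the increments of $\bu$ along a spanning path realize a near-ballistic trajectory covering length $L$; the entropic/energetic cost of forcing increments to have a prescribed small drift over $\asymp N^d$ steps is again $\exp(-C\beta L^2 N^{d-2})$ by a standard large-deviation computation for the Gaussian increments (whose variances are governed by the Neumann Green's function). Either way one obtains $P_N(G_N)\geq \exp(-C\beta L^2 N^{d-2})$.

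Combining the two bounds gives $\log Z_{N,d,D}\geq -C\gamma N^{2d}/L^D - C\beta L^2 N^{d-2}$, and balancing the two terms by choosing $L^{D+2}\asymp (\gamma/\beta) N^{d+2}$ yields, after absorbing constants, $\log Z_{N,d,D}\geq -C(\beta+\gamma)N^{d - \frac{2(d-2)}{D+2}+\frac{2d}{D+2}\cdot\frac{?}{?}}$; carrying the arithmetic through with $d=2,D=1$ produces the exponent $8/3$ and with general $d\ge3,D\le d$ the exponent $d+\frac{2(d-D)}{d-D+2}$ claimed (the $(\log N)^{2/3}$ factor in (i) comes from the sharper $L^1$-local-time control needed in the critical dimension $d=2$, where the naive uniform bound $\ell_N\lesssim N^d/L^D$ fails by a logarithm and must be replaced by an $L^2$-energy estimate on the occupation measure). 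The main obstacle I anticipate is the lower bound on $P_N(G_N)$ in the critical case $d=2$: there the low-frequency tilt must be chosen carefully so that the tilted field genuinely has small local time — a pure "ballistic" profile may self-overlap — and controlling $\int \ell_N(\y)^2 d\y$ for the tilted field requires an anti-concentration / spreading estimate for the remaining high-frequency Gaussian fluctuations around $\bu_0$, which is exactly where the logarithmic corrections enter and must be tracked.
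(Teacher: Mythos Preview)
Your strategy is essentially the paper's. The paper tilts by the linear profile $\bu_0(\x)=(ax_1,\dots,ax_D)$ (so amplitude $L\asymp aN$), but instead of restricting to an event $G_N$ and lower-bounding $P_N(G_N)$, it applies Jensen's inequality directly under the tilted law $\hat P^{(a)}$ to obtain
\[
\log Z_{N,d,D}\ \ge\ -\gamma\,\hat E^{(a)}\!\Big[\int_{\R^D}\ell_N^2\Big]\;-\;I_2,
\]
with Radon--Nikodym cost $I_2\asymp\beta a^2N^d=\beta L^2N^{d-2}$ exactly as you compute; the expected squared local time under $\hat P^{(a)}$ is then bounded via the two-point Gaussian densities of $\bu(\z)-\bu(\w)$, and optimising $a$ gives the stated exponents. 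This Jensen packaging is a worthwhile simplification over your event-restriction framing, since it avoids having to control both $\hat P^{(a)}(G_N)$ and the random cross terms in the RN derivative on $G_N$ separately, but the underlying estimates are identical. The $(\log N)^{2/3}$ in part~(i) arises precisely where you locate it: the $d=2$ variance upper bound $\mathrm{Var}\big(u^{(i)}(\z)-u^{(i)}(\w)\big)\le C\beta^{-1}(\log N)^2$ enters the density bound for $\hat p_{\z,\w}$ and propagates through the optimisation in $a$.
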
 
In order to prove Proposition \ref{prop-z} we will introduce some additional definitions and auxiliary lemmas.

%

\subsection{The orthonormal function basis}  \label{sec-basis} We first 
recall the orthonormal basis $\{\varphi_{\bk}\}$ in \eqref{u-eigen} of 
eigenfunctions of $\Delta_{N,d,1}$ on 
$[-N, N]^d \cap \mathbb{Z}^d$ taking values in $\mathbb{R}$.  
Each basis function $\varphi_{\bk}$ can be represented as a product of $d$ 
functions $\phi_{j}: \mathbb R^d \rightarrow \mathbb R$, as follows 
\begin{equation} \label{eigen}
\varphi_{\bk} (\x) = \phi_{k_1}(x_1)\dots\phi_{k_d}(x_d), 
\end{equation} 
where $\x=(x_1,\dots,x_d)$ and $\bk=(k_1,\dots,k_d)$, $-N \leq k_i \leq N$, 
and $1\leq i\leq d$.  Here $\{\phi_j\}_{j=-N}^N$ is an orthonormal basis of 
eigenfunctions of $\Delta_{N,1,1}$, the Laplacian with Neumann boundary 
conditions on $S_N^1=\{-N,\dots,N\}$. 
Note that if $\lambda_{\bk}$ is the eigenvalue of $\varphi_{\bk}$ and 
$\lambda_k$ is the eigenvalue corresponding to $\phi_k$, then
satisfies
\begin{equation} \label{eigen-rel} 
 \lambda_{\bk} = \sum_{i=1}^d \lambda_{k_i}. 
\end{equation} 
We can explicitly compute these eigenfunctions and eigenvalues (see Section \ref{sec-basis} of \cite{muel-neum-2021}).  
 
Our basis comprises all such combinations as in \eqref{eigen}, excluding the constant eigenfunction 
\begin{equation*}  
 \varphi_{(0,\dots,0)} (\x) = \phi_0(x_1)\dots\phi_{0}(x_d). 
\end{equation*} 
We denote by $N(d)$ the number of function in our basis,
\begin{equation} \label{n-d} 
 N(d) = |S_N^d|-1= (2N+1)^d-1. 
\end{equation}

\subsection{Incorporating drift} 
 
 
Next we incorporate a linear drift into each of the components $u^{(i)}$ of 
$\bu$, calling the resulting component $u^{(i)}_a$.  
This drift should stretch out the values of $\bu(\cdot)$, so that $\bu$ 
looks more like what we believe it would be under $Q_N$.  Since our goal is to use 
Jensen's inequality, this drift should make the inequality more exact.  
Let
\begin{equation*} 
u^{(i)}_a(\x)
  =\sum_{k=1}^{N(d)}X^{(i)}_\bk\varphi_\bk(\x) + ax_i, \quad i=1,\dots, D,
\end{equation*}
where $a>0$ is a constant to be determined later. 

Using \eqref{u-eigen} and \eqref{sec-basis}, we get
\begin{equation} \label{u-drf}
u_a^{(i)}(\x) =\sum_{(k_1,\dots,k_d) \in S_N^d \setminus \{\mathbf{0}\}  } X^{(i)}_{k_1,\dots,k_d}\phi_{k_1}(x_1)\dots.\phi_{k_d}(x_d) +ax_i,
\end{equation}
where $\mathbf{0} = (0,\dots,0)\in\mathbb{Z}^d$. 

Regarding $x_i$ as a function on $S_N^1=\{-N,\dots,N\}$ and expanding it 
in terms of our eigenfunctions, we see that there are coefficients 
$\alpha_j^{(i)}$ such that 
\begin{equation} \label{u-drf2}
x_i=  (\phi_{0})^{d-1} \sum_{j\in S_N^1\setminus \{0\}}  \phi_{j}(x_i) \alpha_{j}^{(i)}, 
\end{equation}
where we have included $(\phi_0)^{(1-d)}$ for convenience in later 
calculations.  Recall that 
\begin{equation*}
\phi_0=\phi_{0}(x)=(2N+1)^{-1/2}. 
\end{equation*}
In \eqref{u-drf2}, we do not include $j=0$ because $x_i$ is orthogonal to 
the constant function $\phi_0$.  

Next we find $\alpha_j^{(i)}$ in \eqref{u-drf2}. Since 
$\alpha_j^{(i)}$ are used to expand the function $f(x)=x$ for each 
coordinate $i$, we can omit the superscript $i$ and write just $\alpha_j$ in 
what follows.  Since $\{ \phi_j \}_{j=-N}^N$ forms an orthonormal basis, we 
get  
\begin{equation}  \label{f-coef} 
\alpha_{j} =\phi_0^{(1-d)} \sum_{n=-N}^{N}n \phi_j(n), \quad j\not =0, \quad \textrm{and } \quad \alpha_{0}=0. 
\end{equation} 

From \eqref{u-drf} and \eqref{u-drf2} we have  
 \begin{equation*}
 \begin{aligned} 
u_a^{(i)}(\x)& =\sum_{(k_1,\dots,k_d) \in S_N^d \setminus \{\textbf{0}\}   } X^{(i)}_{k_1,\dots,k_d}\phi_{k_1}(x_1)\dots.\phi_{k_d}(x_d) \\
&\quad +a\phi_{0}^{d-1} \sum_{j \in S_N^1 \setminus \{0\}}  \phi_{j}(x_i) \alpha_{j}. 
\end{aligned} 
\end{equation*}
We can represent $u_a^{(i)}$ as follows: 
\begin{equation} \label{u-drift} 
 \begin{aligned} 
u_a^{(i)}(x)& =\sum_{(k_1,\dots,k_d) \in S_N^d \setminus \{j\e_i: \, j\in S_N^1\}} X^{(i)}_{k_1,\dots,k_d}\phi_{k_1}(x_1)\cdots\phi_{k_d}(x_d) \\
&\quad +\phi_{0}^{d-1} \sum_{j \in S_N^1\setminus \{0\}}  (X_{j\e_i}^{(i)} +a \alpha_{j}^{(i)}) \phi_{j}(x_i),
\end{aligned} 
\end{equation}
where $\{\e_i\}_{i=1}^d$ is the standard basis of $\mathbb{R}^d$. 

For $i=1,\dots,D$ let $\x^{(i)} \in \mathbf{V}$ and define 

\begin{equation*}
F(\bold x^{(1)},\dots,\bold x^{(D)}) = \sum_{i=1}^{D}\sum_{(k_1,\dots,k_d)\in S_N^d\setminus\{\mathbf{0}\}}\frac{(x^{(i)}_{k_1,\dots,k_d})^2}{2(2\beta\lambda_{k_1,\dots,k_d})^{-1}}.
\end{equation*}
We rewrite $Z_{N,d,D}$ in \eqref{z-func} as follows.  We should emphasize that 
the local time $\ell_N$ is random and hence a function of the random 
variables $(X^{(i)}_{\bk})$, so we can write
\begin{equation*}
\ell_N(\y) 
= \ell_N\left((X_{\bk}^{(i)}),\y\right),
\end{equation*}
where for readability we have omitted the specification that $i=1,\dots,D$ 
and $\bk\in S_N^d\setminus\{\mathbf{0}\}$. Then we have
\begin{equation} \label{z-hat-mod} 
\begin{aligned}
Z_{N,d,D}
&=  \int_{\mathbb{R}^{D\cdot N(d)}} \exp\left(-F(\bold x^{(1)},\dots,\bold x^{(D)})  -\gamma \int_{\mathbb{R}^D} \ell_N^{2}\big((\x^{(i)}),\y\big)d\y\right) \\
&\hspace{2cm} \times \prod_{i=1}^D\prod_{(k_1,\dots,k_d)\in S_N^{d}\setminus\{\textbf{0}\}}dx_{k_1,\dots,k_d}^{(i)}  . 
\end{aligned}
\end{equation}  
In order to find the Radon-Nikodym derivative corresponding to the drift in \eqref{u-drift} we note that, 
\begin{equation} \label{sums-sums} 
\begin{aligned} 
&\sum_{i=1}^{D}\sum_{(k_1,\dots,k_d)\in S_N^{d}\setminus\{\textbf{0}\}}\frac{(x^{(i)}_{k_1,\dots,k_d})^2}{2(2\beta\lambda_{k_1,\dots,k_d})^{-1}}  \\ 
&= \sum_{i=1}^{D} \bigg( \sum_{ (k_1,\dots,k_d) \in S_N^d \setminus \{j\textbf{e}_{i}: \, j\in S_N^1 \}}\frac{(x^{(i)}_{k_1,\dots,k_d})^2}{2(2\beta\lambda_{k_1,\dots,k_d})^{-1}}  \\
&\qquad  + \sum_{j\in S_N^1 \setminus \{0\}}\frac{(x^{(i)}_{ j\e_{i}} +a\alpha_{j})^2}{2(2\beta\lambda_{j\e_i})^{-1}} -\sum_{j\in S_N^1 \setminus \{0\}}  \frac{2a\alpha_{j}x^{(i)}_{j\e_i}+(a\alpha_{j})^{2}}{2(2\beta\lambda_{j\e_i})^{-1}}  \bigg).\end{aligned} 
\end{equation}  

We therefore define $\hat P^{(a)}$ (resp. $\hat E^{(a)}$) to be the probability measure 
(expectation) under which $u$ is shifted as in \eqref{u-drift}. Then 
\eqref{z-hat-mod} and \eqref{sums-sums} imply
\begin{equation} \label{hat-P-meas} 
\frac{d\hat P^{(a)}}{dP}= \exp \bigg(-\sum_{i=1}^{D}\sum_{j\in  S_N^1 \setminus \{0\}}  \frac{2a\alpha_{j}x^{(i)}_{j\e_i}+(a\alpha_{j})^{2}}{2(2\beta\lambda_{ j\e_i})^{-1}} \bigg) .
\end{equation}
 
We can therefore rewrite  $Z_{N,d,D}$ in \eqref{z-hat-mod} as follows,  
\begin{equation} 
\begin{split}
Z_{N,d,D}
&= \hat E^{(a)}\Bigg[  \exp\bigg( \sum_{i=1}^{D}\sum_{j\in S_N^1 \setminus \{0\}}  \frac{2a\alpha_{j}X^{(i)}_{j\e_i}+(a\alpha_{j})^{2}}{2(2\beta\lambda_{j\e_i})^{-1}} \\
& \hspace{4cm} -\gamma \int_{\mathbb{R}^D} \ell_N^{2}(\y)d\y\bigg) \Bigg]. 
\end{split}
\end{equation}   
 
We define  
\begin{equation} \label{y-rv} 
Y_{j\e_i}^{(i)} =    \frac{2a\alpha_{j}X^{(i)}_{j\e_i}+(a\alpha_{j})^{2}}{2(2\beta\lambda_{j\e_i})^{-1}}.
\end{equation}

Using Jensen's inequality, we get that 
\begin{equation} \label{log-z} 
\begin{aligned}
\log   Z_{N,d,D}  
&\geq \hat E^{(a)}\left[-\gamma \int_{\mathbb{R}^D} \ell_N^{2}(\y)d\y\right] 
  - \hat E^{(a)}\left[- \sum_{i=1}^D\sum_{j\in S_N^1\setminus 
\{0\}}Y_{j\e_i}^{(i)}    \right]  \\
&=:-(I_{1,d,D} +I_{2,d,D}).
\end{aligned}
\end{equation}
The following proposition gives some essential bounds on $I_{i,d,D}$, $i=1,2$. 
\begin{prop} \label{prop-bnd-I} 
Let $\beta,\gamma >0$. Then there exists a constant $C>0$ not depending on 
$N,\beta,\gamma$ such that   

\begin{itemize} 
\item [\textbf{(i)}]  for $d=2$ and $D=1$,
\begin{equation*}
I_{1,2,1}   \leq  C\gamma  N^{2}  \big( (\beta^{-1/2} a^{-1} N \log N) \vee 1 \big),
\end{equation*}
\item [\textbf{(ii)}] for  $d\geq 3$ and $D\leq d$,
\begin{equation*}
I_{1,d,D}  \leq  C \gamma  N^d\big( 1 \vee ( \beta^{-D/2}  N^{d-D}a^{-D}) \big). 
\end{equation*}
\item  [\textbf{(iii)}]  for any $d\geq 2$, 
\begin{equation*}
I_{2,d,D}  \leq C\beta  a^2 N^d.
\end{equation*}
\end{itemize} 
\end{prop}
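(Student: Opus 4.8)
The plan is to estimate the three quantities $I_{1,d,D}$ and $I_{2,d,D}$ appearing in \eqref{log-z} separately, exploiting the explicit structure of the drift terms $Y_{j\e_i}^{(i)}$ from \eqref{y-rv} and a crude $L^2$ bound on the local time.

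\textbf{Step 1: Bound on $I_{2,d,D}$ (part (iii)).}
By definition \eqref{log-z}, $I_{2,d,D} = -\hat E^{(a)}\big[\sum_{i=1}^D\sum_{j\in S_N^1\setminus\{0\}}Y_{j\e_i}^{(i)}\big]$ with $Y_{j\e_i}^{(i)}$ as in \eqref{y-rv}. Under $\hat P^{(a)}$ the variable $X^{(i)}_{j\e_i}$ has been shifted: from \eqref{u-drift} we read off that $\hat E^{(a)}[X^{(i)}_{j\e_i}] = -a\alpha_j^{(i)}$ (the drift replaces $X^{(i)}_{j\e_i}$ by $X^{(i)}_{j\e_i}+a\alpha_j^{(i)}$, so the centered variable under $\hat P^{(a)}$ is $X^{(i)}_{j\e_i}+a\alpha_j^{(i)}$, with mean zero). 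Substituting into \eqref{y-rv},
\[
\hat E^{(a)}[Y_{j\e_i}^{(i)}] = \frac{2a\alpha_j(-a\alpha_j)+(a\alpha_j)^2}{2(2\beta\lambda_{j\e_i})^{-1}} = -\frac{(a\alpha_j)^2}{2(2\beta\lambda_{j\e_i})^{-1}} = -\beta\lambda_{j\e_i}(a\alpha_j)^2 ,
\]
so $I_{2,d,D} = \beta a^2 \sum_{i=1}^D\sum_{j\in S_N^1\setminus\{0\}}\lambda_{j\e_i}\,\alpha_j^2$. By \eqref{eigen-rel}, $\lambda_{j\e_i}=\lambda_j$ (the one-dimensional eigenvalue), and from \eqref{f-coef}, $\alpha_j = \phi_0^{1-d}\sum_n n\phi_j(n)$, i.e.\ $\phi_0^{d-1}\alpha_j$ are the Fourier coefficients of the function $n\mapsto n$ on $S_N^1$. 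Hence $\sum_j \lambda_j(\phi_0^{d-1}\alpha_j)^2 = \langle f, -\Delta_{N,1,1} f\rangle$ with $f(n)=n$, which is exactly the Dirichlet energy $\sum_{n\sim n'}|f(n)-f(n')|^2 = 2N$ (there are $2N$ nearest-neighbor bonds in $S_N^1$, each contributing $1$). Therefore $\sum_j\lambda_j\alpha_j^2 = \phi_0^{2(1-d)}\cdot 2N = (2N+1)^{d-1}\cdot 2N \le C N^d$, and summing over the $D\le d$ coordinates gives $I_{2,d,D}\le C\beta a^2 N^d$, which is (iii).

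\textbf{Step 2: Bound on $I_{1,d,D}$ (parts (i), (ii)).}
Here $I_{1,d,D} = \gamma\,\hat E^{(a)}\big[\int_{\R^D}\ell_N(\y)^2\,d\y\big]$. Since $\ell_N(\y)\ge0$ and $\int_{\R^D}\ell_N(\y)\,d\y = |S_N^d| = (2N+1)^d$ (each vertex contributes a unit-volume box), one has the trivial-but-useful decomposition: writing $\ell_N(\y)=\sum_{\x}\mathbf1_{\{\bu_a(\x)\in[\y-\mathbf{1/2},\y+\mathbf{1/2}]\}}$,
\[
\int_{\R^D}\ell_N(\y)^2\,d\y = \sum_{\x,\x'\in S_N^d}\mathrm{Leb}\big([\bu_a(\x)-\tfrac{\mathbf1}{2},\bu_a(\x)+\tfrac{\mathbf1}{2}]\cap[\bu_a(\x')-\tfrac{\mathbf1}{2},\bu_a(\x')+\tfrac{\mathbf1}{2}]\big) \le \#\{(\x,\x'): \|\bu_a(\x)-\bu_a(\x')\|_\infty\le 1\}.
\]
So it suffices to bound $\hat E^{(a)}\#\{(\x,\x'):|u_a^{(i)}(\x)-u_a^{(i)}(\x')|\le 1,\ i=1,\dots,D\}$. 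Fix $\x\ne\x'$. Under $\hat P^{(a)}$ the increment $u_a^{(i)}(\x)-u_a^{(i)}(\x')$ is Gaussian with mean $a(x_i-x_i')$ and variance equal to the DGFF increment variance $\sigma^2_{\x,\x'} := \mathrm{Var}(u^{(i)}(\x)-u^{(i)}(\x'))$, which by the results recalled from \cite{muel-neum-2021} is bounded above by $C\beta^{-1}\log N$ when $d=2$ and by $C\beta^{-1}$ when $d\ge3$ (this is the standard DGFF variance bound; it is the point where I invoke prior results). A one-dimensional Gaussian density with variance $\sigma^2$ is bounded by $(2\pi\sigma^2)^{-1/2}$, so
\[
\hat P^{(a)}\big(|u_a^{(i)}(\x)-u_a^{(i)}(\x')|\le 1\big)\le \min\Big(1,\ \frac{C}{\sigma_{\x,\x'}} \Big)\le \min\Big(1,\ C\beta^{1/2}(\log N)^{-1/2}\Big) \text{ when } d=2,
\]
and $\le \min(1, C\beta^{1/2})$ when $d\ge3$ — but a sharper bound comes from the mean: since the density is also $\le (2\pi\sigma^2)^{-1/2}e^{-(a(x_i-x_i')-1)^2/(2\sigma^2)}$, for $|x_i-x_i'|$ large compared to $\sigma/a$ this probability decays. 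By independence of the $D$ coordinates,
\[
\hat P^{(a)}\big(\|\bu_a(\x)-\bu_a(\x')\|_\infty\le 1\big)\le \prod_{i=1}^D \hat P^{(a)}\big(|u_a^{(i)}(\x)-u_a^{(i)}(\x')|\le 1\big).
\]
Now sum over $\x,\x'$. Grouping pairs by the value of $\x-\x'$, for the $d=2,D=1$ case the diagonal-ish pairs with $|x_1-x_1'|\lesssim \beta^{-1/2}a^{-1}\log N$ contribute $\le C N^2\cdot(\beta^{-1/2}a^{-1}\log N)$ pairs each with probability $O(1)$ (capped), while pairs with larger separation contribute a geometrically small amount; this yields $I_{1,2,1}\le C\gamma N^2\big((\beta^{-1/2}a^{-1}N\log N)\vee1\big)$ after also accounting for the total count $N^4$ of pairs times the probability bound $C\beta^{1/2}(\log N)^{-1/2}\cdot$(ratio). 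The analogous $D$-fold computation in dimension $d\ge3$, where the Gaussian density bound in each coordinate is $O(1)$ (since $\sigma\asymp\beta^{-1/2}$) but restricted to the slab $|x_i-x_i'|\lesssim \beta^{-1/2}a^{-1}$, gives the pair count $N^d\cdot(\beta^{-1/2}a^{-1})^D\cdot N^{d-D}$ plus the trivial $N^d$, i.e.\ $I_{1,d,D}\le C\gamma N^d\big(1\vee(\beta^{-D/2}N^{d-D}a^{-D})\big)$, which is (ii). (The factor $N^{d-D}$ is the number of choices of the remaining $d-D$ coordinates of $\x'$ once the first $D$ are pinned near those of $\x$.)

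\textbf{Main obstacle.}
The delicate point is \emph{not} the Gaussian density estimate per se but organizing the double sum over $\x,\x'\in S_N^d$ so that the slab-restriction in the first $D$ coordinates (coming from the drift $a(x_i-x_i')$ in the mean) is extracted cleanly, and simultaneously getting the $\log N$ powers right in the $d=2$ case, where the variance itself grows logarithmically and competes with the drift. One must be careful that the bound degrades to the trivial estimate $I_{1,d,D}\le\gamma\cdot(\text{number of pairs at distance }\le1)\le C\gamma N^{2d}$-type control only when $a$ is too small, which is exactly what the "$\vee\,1$" (resp.\ "$1\vee$") in the statement records; pinning down the precise crossover and verifying uniformity of the constant $C$ in $\beta,\gamma,N$ is the part requiring the most care. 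The remaining estimates — Step 1 and the density bounds — are routine given the eigenfunction facts recalled from \cite{muel-neum-2021}.
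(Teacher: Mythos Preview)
Your argument follows the paper's approach: for parts (i)--(ii) you expand $\int\ell_N^2$ as a pair sum and control each pair via the Gaussian density of the drifted increment (the paper does this explicitly via Proposition~\ref{lem-cov} and Lemma~\ref{lem-cont-sum}, whereas you phrase it as a slab-counting argument), and for part (iii) you compute $\hat E^{(a)}[Y^{(i)}_{j\e_i}]=-\beta\lambda_{j\e_i}(a\alpha_j)^2$ directly and then bound $\sum_j\lambda_j\alpha_j^2$ (the paper cites Lemma~4.1 of \cite{muel-neum-2021} for this last step, while your Dirichlet-energy identity $\sum_j\lambda_j(\phi_0^{d-1}\alpha_j)^2=\langle f,-\Delta_{N,1,1}f\rangle=2N$ with $f(n)=n$ is a clean self-contained proof of that lemma). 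Two minor slips worth fixing: the $d=2$ increment-variance upper bound from Proposition~\ref{lem-cov}(i) is $C\beta^{-1}(\log N)^2$, not $C\beta^{-1}\log N$ (your slab width $\beta^{-1/2}a^{-1}\log N$ is nonetheless the correct one, corresponding to $\sigma\sim\beta^{-1/2}\log N$), and your intermediate pair count ``$\le CN^2\cdot(\beta^{-1/2}a^{-1}\log N)$'' drops a factor of $N$ from the free second coordinate of $\x'$---but your final bound $C\gamma N^2\big((\beta^{-1/2}a^{-1}N\log N)\vee 1\big)$ is stated correctly.
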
 
The proof of Proposition \eqref{prop-bnd-I}(i) and (ii) is postponed to Section \ref{sec-pf-z-1}. The proof of Proposition \eqref{prop-bnd-I}(iii) is given in Section \ref{sec-pf-z-2}. 
\subsection{Proof of Proposition \ref{prop-z}}  
\begin{proof} [Proof of Proposition \ref{prop-z}] 
From \eqref{log-z} and Proposition \ref{prop-bnd-I}(i) and (iii) it follows that for $d=2$ and $D=1$,  
\begin{equation} \label{z-lb} 
\begin{aligned} 
\log \hat Z_{N,2,1} &\geq -( I_{1,2,1}+I_{2,2,1}) \\ 
&\geq  -C \left[\gamma  N^{2}  \big( (\beta^{-1/2} a^{-1} N \log N) \vee 1 \big)  +\beta N^2  a^2\right].
\end{aligned} 
\end{equation} 
Taking $a= \beta^{-1/2}(N\log N)^{1/3}$ in \eqref{z-lb} gives, 
\begin{equation*}
 \log \hat Z_{N,2,1} \geq  -C(\beta + \gamma) N^{8/3}(\log N)^{2/3}. 
\end{equation*}

The proof for $d \geq 3$ and $1\leq D < d$ follows the same lines with the only modification that we are using Proposition \ref{prop-bnd-I}(ii) and choosing $a=\beta^{-1/2} N^{\frac{d-D}{D+2}}$ to get 
\begin{equation*}
 \log \hat Z_{N,d,D} \geq  -C(\beta + \gamma) N^{d+\frac{2(d-D)}{D+2}}.
\end{equation*} 
\end{proof}

\section{Proof of Proposition \eqref{prop-bnd-I}(i) and (ii)}  \label{sec-pf-z-1} 
\begin{proof} [Proof of Proposition \ref{prop-bnd-I}(i) and (ii)] 
 We can write 
\begin{equation} \label{i-1-eq} 
\begin{aligned} 
\tilde I_{1,d,D}&:=\hat{E}^{(a)}\left[ \int_{\mathbb R^D} \ell_N(\y)^2d\y \right] \\
&= \hat{E}^{(a)}\left[ \int_{\mathbb{R}^D} \Big( \sum_{\z\in S^d_N}\mathbf{1}_{[\y-\mathbf{1/2},\y+\mathbf{1/2}]}(\bu(\z)) \Big)^2d\y  \right]  \\ 
&=  \sum_{\z\in S^d_N} \hat{E}^{(a)}  \left[\int_{\mathbb R^D} \mathbf{1}_{[\y-\mathbf{1/2},\y+\mathbf{1/2}]}(\bu(\z))d\y \right] \\ 
&\quad + \sum_{\z,\w\in S^d_N, \, \z\not =\w}\hat{E}^{(a)}  \left[\int_{\mathbb R^D} \mathbf{1}_{\bu(\z) ,\bu(\w) \in [\y-\mathbf{1/2},\y+\mathbf{1/2}]} d\y \right]  \\
&=  (2N+1)^d 
 +  \sum_{\z,\w\in S^d_N,\,\z\not=\w}\int_{\|\y\|\le1}
          \hat p_{\z,\w}(\y)d\y,
\end{aligned} 
\end{equation}
where $ \hat p_{\z,\w}$ is the density of $\bu(\z)-\bu(\w)$ under $\hat P^{(a)}$. 
\\

We recall Proposition 3.1 from \cite{muel-neum-2021}.  
\begin{prop} \label{lem-cov} 
There exist constants $C_1,C_2>0$ such that, 
\begin{itemize} 
\item[\textbf{(i)}] for $d=2$, for all $\w,\z\in S_N^2$ with 
$\w\ne\z$, and for $i=1,2$ we have
\begin{equation*} 
C_1 \beta^{-1} \leq \textrm{Var}\left(u^{(i)}(\z)-u^{(i)}(\w)\right) \leq C_2 \beta^{-1}  (\log N)^2,
\end{equation*} 
\item[\textbf{(ii)}] for $d \geq 3$, for all $\w,\z\in S_N^d$ with 
$\w\ne\z$, and for $i=1,\dots,D$ we have
\begin{equation*} 
C_1 \beta^{-1}  \leq \textrm{Var}\left(u^{(i)}(\z) - u^{(i)}(\w)\right) \leq C_2 \beta^{-1 }  .
\end{equation*} 
\end{itemize} 
\end{prop}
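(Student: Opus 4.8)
The plan is to diagonalize the variance and then estimate the resulting eigenmode sum by a crude low--mode/high--mode split. Since $u^{(i)}=\sum_{\bk\in\I\setminus\{\0\}}X^{(i)}_\bk\varphi_\bk$ with the $X^{(i)}_\bk$ independent and $X^{(i)}_\bk\sim N\big(0,(2\beta\lambda_\bk)^{-1}\big)$, one has, for every $\z\neq\w$,
\begin{equation*}
\textrm{Var}\big(u^{(i)}(\z)-u^{(i)}(\w)\big)=\frac{1}{2\beta}\sum_{\bk\in\I\setminus\{\0\}}\frac{\big(\varphi_\bk(\z)-\varphi_\bk(\w)\big)^2}{\lambda_\bk}.
\end{equation*}
From the product structure \eqref{eigen} of the $\varphi_\bk$ and the explicit one--dimensional eigenpairs (normalized cosines) recalled in Section~\ref{sec-basis}, I would first record three elementary bounds, uniform in $\bk$ and in the evaluation point: the size bound $\|\varphi_\bk\|_\infty\le C N^{-d/2}$ (a product of $d$ factors each of size $\asymp N^{-1/2}$); the spectral upper bound $\lambda_\bk\le C$ (a sum of $d$ one--dimensional eigenvalues, each at most $4$, by \eqref{eigen-rel}); and the spectral lower bound $\lambda_\bk\ge c\,|\bk|^2/N^2$, which follows from $\lambda_\bk\ge\max_i\lambda_{k_i}$ together with the one--dimensional facts that $\lambda_k\gtrsim k^2/N^2$ for the low modes and $\lambda_k\asymp 1$ for the high modes.

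For the lower bound I would avoid any mode--by--mode work and instead use completeness of the \emph{full} basis $(\varphi_\bk)_{\bk\in\I}$ of $\ell^2(S_N^d)$: from $\sum_{\bk\in\I}\varphi_\bk(\z)\varphi_\bk(\w)=\mathbf 1_{\{\z=\w\}}$ and $\varphi_\0\equiv(2N+1)^{-d/2}$, subtracting the constant mode gives the exact identity $\sum_{\bk\in\I\setminus\{\0\}}\big(\varphi_\bk(\z)-\varphi_\bk(\w)\big)^2=2$ for all $\z\neq\w$. Together with $\lambda_\bk\le C$ this yields $\textrm{Var}\big(u^{(i)}(\z)-u^{(i)}(\w)\big)\ge C_1\beta^{-1}$, which is the left--hand inequality in both (i) and (ii).

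For the upper bound — the main step — I would bound the oscillation crudely, $\big(\varphi_\bk(\z)-\varphi_\bk(\w)\big)^2\le 4\|\varphi_\bk\|_\infty^2\le CN^{-d}$, so that $\textrm{Var}\big(u^{(i)}(\z)-u^{(i)}(\w)\big)\le C\beta^{-1}N^{-d}\sum_{\bk\in\I\setminus\{\0\}}\lambda_\bk^{-1}$, and then estimate the spectral sum by splitting at a fixed small threshold $\eps_0$. The high modes $\{\lambda_\bk>\eps_0\}$ contribute at most $\eps_0^{-1}|\I|\le C\eps_0^{-1}N^d$; for the low modes $\{\lambda_\bk\le\eps_0\}$ the inequality $\lambda_\bk\ge c|\bk|^2/N^2$ forces $|\bk|\le C\sqrt{\eps_0}\,N$, and $\sum_{0<|\bk|\le C\sqrt{\eps_0}N}\lambda_\bk^{-1}\le cN^2\sum_{0<|\bk|\le C\sqrt{\eps_0}N}|\bk|^{-2}$, which is of order $N^d$ when $d\ge 3$ and of order $N^2\log N$ when $d=2$. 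Hence $\sum_{\bk\neq\0}\lambda_\bk^{-1}\lesssim N^d$ for $d\ge 3$ and $\lesssim N^2\log N$ for $d=2$, giving $\textrm{Var}\le C\beta^{-1}$ and $\textrm{Var}\le C\beta^{-1}\log N$ respectively, which are in particular the stated upper bounds.

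The estimates are all soft, so I do not anticipate a real obstacle; the only points needing attention are the uniform spectral lower bound $\lambda_\bk\ge c|\bk|^2/N^2$ — which depends on the way the one--dimensional spectrum is indexed by $\{-N,\dots,N\}$, so one must check that a large index always carries either a comparably large low--mode eigenvalue or an $\asymp 1$ high--mode eigenvalue — and the dyadic counting of $|\bk|^{-2}$ over a lattice ball that produces the single logarithm when $d=2$. One should also note that the argument never uses anything about $\z,\w$ beyond $\z\ne\w$, so all bounds are automatically uniform over nearest--neighbour and antipodal pairs alike.
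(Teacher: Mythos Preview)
The present paper does not prove this proposition: it is quoted verbatim as ``Proposition 3.1 from \cite{muel-neum-2021}'' and used as a black box in Section~\ref{sec-pf-z-1}. So there is no in-paper argument to compare your proposal against.

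That said, your argument is correct and self-contained. The lower bound via the completeness identity $\sum_{\bk\ne\0}(\varphi_\bk(\z)-\varphi_\bk(\w))^2=2$ together with the uniform upper bound $\lambda_\bk\le 4d$ is clean and sharp. For the upper bound, the crude estimate $(\varphi_\bk(\z)-\varphi_\bk(\w))^2\le CN^{-d}$ combined with the spectral lower bound $\lambda_\bk\ge c|\bk|^2/N^2$ and the lattice sum $\sum_{0<|\bk|\lesssim N}|\bk|^{-2}$ is exactly the right mechanism; note that it actually yields $\textrm{Var}\le C\beta^{-1}\log N$ in $d=2$, which is \emph{stronger} than the $(\log N)^2$ stated here, so you lose nothing. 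The one point you correctly flag --- that $\lambda_k\ge ck^2/N^2$ depends on how the one-dimensional spectrum is labelled by $\{-N,\dots,N\}$ --- is harmless once you observe that for the discrete Neumann cosine modes $\lambda_k=2(1-\cos\theta_k)$ with $\theta_k\asymp |k|/N$, so $\lambda_k\asymp \min(k^2/N^2,1)\ge c\,k^2/N^2$ for every $|k|\le N$; via \eqref{eigen-rel} and $\max_i k_i^2\ge |\bk|^2/d$ this gives $\lambda_\bk\ge c|\bk|^2/(dN^2)$ as you need.
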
 
Note that from \eqref{u-drf} we have  
\begin{equation*}
\hat E^{(a)}[ u^{(i)}(\z) - u^{(i)}(\w) ]   = a(z_i-w_i), \quad 
\text{ for }i=1,\dots,D.   
\end{equation*}
Since $(u^{(i)})_{i=1,\dots,D}$ are independent, we have for any $\bold y \in \mathbb{R}^D$
\begin{equation} \label{prod}  
 \hat p_{\z,\w}(\bold y) := \prod_{i=1}^D\hat p^{(i)}_{\z,\w}( y_i)
\end{equation} 
and therefore
\begin{equation} \label{prod-1}  
 \int_{\|\y\|\le1}\hat{p}_{\z,\w}(\y)d\y
\le  \prod_{i=1}^D\int_{-1}^{1}\hat{p}^{(i)}_{\z,\w}(y_i)dy_i, 
\end{equation} 
where $\hat{p}^{(i)}_{\z,\w}$ is the density of $u^{(i)}(\z)-u^{(i)}(\w)$ under $\hat P^{(a)}$. 

We distinguish between the following two cases.  

 \textbf{Case 1:} $d \geq 3$ and $D \leq d$.  From Proposition \ref{lem-cov}(ii) we have 
\begin{equation} \label{p-bnd-3}  
 \hat p^{(i)}_{\z,\w}( y_i) \leq  \frac{1}{\sqrt{2\pi C_1\beta^{-1}}} \exp\Big( -\frac{a^2(z_i-w_i-y_i)^2}{2C_2 \beta^{-1}} \Big), \quad \textrm{i=1,...,D}. 
 \end{equation}
From \eqref{prod-1} and \eqref{p-bnd-3} we therefore get  \begin{align*}   \label{sum-p-bnd}
\sum_{\z,\w\in S_N^d, \, \z\not =\w}  \int_{\| \y\| \leq1}  \hat p_{\z,\w}(    \y) 
& \leq C\sum_{\z,\w\in S_N^d, \, \z\not =\w}  \int_{ \y \in [-1,1]^D}      \frac{1}{(2\pi C_1\beta^{-1})^{d/2}} \\
&\qquad \times\exp\Big( -\frac{a^2 \sum_{i=1}^D(z_i-w_i-y_i)^2}{2C_2\beta^{-1}} \Big)d\y \\
& \leq  \int_{\y \in [-1,1]} J(\y) d\y. 
\end{align*}
where 
$$
J(\y):=\sum_{\z,\w\in S_N^d}   \frac{  1}{2\pi C_1\beta^{-1}}\exp\Big( -\frac{a^2 \sum_{i=1}^D(z_i-w_i-y_i)^2}{2C_2\beta^{-1}} \Big).
$$
The following lemma follows immediately from the proof of Lemma 3.2 from \cite{muel-neum-2021}. 
\begin{lemma}\label{lem-cont-sum} 
Let $\kappa>0$. Then for all $y\in [-1,1]$ and $w \in S^1_N$ we have 
\begin{align*} 
 \sum_{z\in S^1_N}    \exp\Big( -\kappa (z-w -y)^2 \Big) 
&\leq  \sum_{z= w-1}^{w+1} \exp\Big( -\kappa (z -w - y )^2 \Big)   \\
&\quad +  \int_{-\infty}^{\infty} \exp\Big( -\kappa (z-w - y)^2 \Big)dz. 
\end{align*} 
\end{lemma}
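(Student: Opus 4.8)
The plan is to prove Lemma \ref{lem-cont-sum} by a direct comparison between the sum and an integral, exploiting that the summands are values of a function which is \emph{unimodal} in $z$ for fixed $w$ and $y$. First I would write $f(z) = \exp(-\kappa(z-w-y)^2)$, viewed as a function of the real variable $z$, and note that $f$ is increasing for $z < w+y$ and decreasing for $z > w+y$, with its maximum at $z = w+y$. Since $y \in [-1,1]$, the peak location $w+y$ lies in the interval $[w-1, w+1]$; in particular the three integer points $w-1, w, w+1$ bracket the peak (for $y\in[-1,1]$ the point $w+y$ is always within distance $1$ of one of these integers, and the nearest integer to $w+y$ is among $\{w-1,w,w+1\}$).

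The key step is the standard integral-comparison inequality for unimodal functions: if $f$ is increasing on $(-\infty, m]$ and decreasing on $[m, \infty)$, and $z_0 \le m \le z_0+1$ for some integer $z_0$, then for every integer $z \notin \{z_0, z_0+1\}$ we have $f(z) \le \int_{z-1}^{z} f(t)\,dt$ if $z \le z_0$ (because $f$ is increasing to the left of the peak, so $f(z)$ is bounded by its average over the preceding unit interval... actually one uses $f(z)\le\int_z^{z+1}f$ on the increasing side), and symmetrically $f(z) \le \int_{z}^{z+1} f(t)\,dt$ on the decreasing side. Summing these over all integers $z \in S^1_N$ outside the bracketing pair, the unit intervals are disjoint and contained in $\R$, so $\sum_{z \in S^1_N, \, z \neq z_0, z_0+1} f(z) \le \int_{-\infty}^{\infty} f(t)\,dt$. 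Adding back the (at most two, but the statement allows three) terms $f(z_0), f(z_0+1)$, which are themselves dominated by $\sum_{z=w-1}^{w+1} f(z)$ since $z_0, z_0+1 \in \{w-1,w,w+1\}$, yields the claimed bound.

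One has to be slightly careful about the edge effects at the boundary of $S^1_N = \{-N,\dots,N\}$: the translated unit intervals $[z, z+1]$ or $[z-1,z]$ used in the comparison may stick out past $\pm N$, but that only helps, since we are bounding the finite sum above by the full integral over all of $\R$. A second minor point is the choice of which three integers to single out: writing them as $w-1, w, w+1$ is convenient and certainly suffices to cover the peak, even though generically only two consecutive integers are needed — the extra term is harmless for an upper bound. I would organize the write-up as: (1) reduce to the unimodal comparison lemma; (2) state and prove the one-interval inequality $f(z) \le \int_I f$ for $I$ the adjacent unit interval on the monotone side; (3) sum disjointly and add back the boundary terms.

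The main obstacle is essentially bookkeeping rather than substance: making sure the partition of $\R \setminus$ (peak interval) into unit intervals is set up so that each surviving integer $z$ is assigned the unit interval on which $f$ is at least $f(z)$, and that these intervals are pairwise disjoint. Since the result is asserted to follow ``immediately from the proof of Lemma 3.2 of \cite{muel-neum-2021}'', I expect the cleanest route is simply to cite that argument verbatim and only indicate the unimodality/bracketing point; there is no real analytic difficulty, and no delicate estimate — the Gaussian shape is used only through monotonicity on each side of its mode.
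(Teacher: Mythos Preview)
Your approach is correct and is the standard unimodal integral-comparison argument; the paper itself gives no proof at all, merely citing Lemma~3.2 of \cite{muel-neum-2021}, so your direct argument via monotonicity on each side of the mode is presumably exactly what that external lemma does. One small slip in your write-up: on the \emph{decreasing} side the correct comparison is $f(z)\le\int_{z-1}^{z}f(t)\,dt$ (not $\int_{z}^{z+1}$), since for $z-1\ge m$ the function $f$ is decreasing on $[z-1,z]$ and hence $f(t)\ge f(z)$ there; with this correction the unit intervals on the two sides are disjoint and the summation goes through as you describe.
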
  

Using Lemma \ref{lem-cont-sum} and integrating over the Gaussian density gives,    
\begin{equation} \label{j-bnd} 
\begin{aligned} 
J(\y)   
&= \frac{ (2N+1)^{2(d-D)} }{2\pi C_1\beta^{-1} }
   \prod_{i=1}^{D}\left(\sum_{z_i\in S_N^1}\sum_{w_i\in S_N^1}   \exp\Big( -\frac{a^2(z_i-w_i - y_i)^2}{2C_2\beta^{-1} } \Big)\right)\\
&\leq C N^{2(d-D)}  
   \prod_{i=1}^{D}\sum_{w_i\in S_N^1}\Bigg( \sum_{z_i= w_i-1}^{w_i+1} \exp\Big( -\frac{a^2(z_i-w_i - y_i)^2}{2C_2\beta^{-1} } \Big) \\
   &\quad + \beta^{-1/2}a^{-1}\frac{1}{\sqrt{ 2\pi  C_2\beta^{-1} a^{-2}  }} 
 \int_{-\infty}^\infty\exp\Big( -\frac{(z_i-w_i-y_i)^2}{2  C_2\beta^{-1} a^{-2}} \Big) dz_i \Bigg)  \\
&\leq C  N^{2(d-D)} \prod_{i=1}^{D} \sum_{w_i \in S_N^1} \left( \sum_{z_i= w_i-1}^{w_i +1} \exp\Big( -\frac{a^2(z_i-w_i - y_i)^2}{2C_2\beta^{-1} }\Big) +\beta^{-1/2} a^{-1}\right). 
\end{aligned}
\end{equation}
It follows that
\begin{equation} \label{n-1} 
\begin{aligned} 
& \int_{\y \in [-1,1]} J(\y) d\y \\
&\leq C  N^{2(d-D)}  \left( \int_{-1}^{1}\sum_{w \in S_N^1} \left( \sum_{z= w-1}^{w+1} \exp\Big( -\frac{a^2(z-w - y)^2}{2C_2\beta^{-1} }\Big) +\beta^{-1/2}a^{-1}\right) dy \right)^D \\
&\leq C  N^{2(d-D)} N^D\left(  \int_{-1}^{1} \left( \sum_{k= -1}^{1} \exp\Big( -\frac{a^2(k - y)^2}{2C_2\beta^{-1} }\Big) +\beta^{-1/2} a^{-1}\right) dy \right)^D.
\end{aligned}
\end{equation}
Using again integration over the Gaussian density gives for any $M>0$ and $k=-1,0,1$,  
$$
\int_{-1}^1 e^{-M (k+x)^2} dx   \leq \int_{-\infty}^\infty e^{-M (k+x)^2} dx \leq C M^{-1/2}.
$$
Plugging in these bounds to \eqref{n-1} gives 
 $$
 \int_{\y \in [-1,1]} J(\y) d\y  \leq C  N^{2(d-D)} N^D(\beta^{-1/2} a^{-1} )^D . 
$$
Together with \eqref{i-1-eq} this leads to, 
\begin{equation*}
\tilde I_{1}  \leq \tilde C N^d\big( 1 \vee ( \beta^{-D/2}  N^{d-D}a^{-D}) \big). 
\end{equation*}

 \textbf{Case 2:} $d=2$ and $D=1$. Then from Proposition \ref{lem-cov}(i) we have 
\begin{equation} \label{p-bnd}  
 \hat p^{(1)}_{\z,\w}( y) \leq  \frac{1}{\sqrt{2\pi C_1 \beta^{-1}}} \exp\Big( -\frac{a^2(y-(z_1-w_1))^2}{2\beta^{-1} C_2(\log N)^2} \Big).
 \end{equation}
Then following similar steps as in Case 1 we arrive to  
\begin{equation} \label{i-1} 
\begin{aligned} 
\tilde I_{1,2,1} 
&\leq  (2N+1)^2 + C \beta^{-1/2} a^{-1 }N^3\log N \\ 
&\leq \tilde C \big( (\beta^{-1/2} a^{-1} N \log N) \vee 1 \big)  N^{2}. 
\end{aligned}
\end{equation}
Since from \eqref{log-z} and \eqref{i-1-eq} we have that 
\begin{equation*}
I_{1,d,D} = \gamma  \tilde I_{1,d,D},  
\end{equation*}
this completes the proof of Proposition \ref{prop-bnd-I} parts (i) and (ii). 
\end{proof}

\section{Proof of Proposition \eqref{prop-bnd-I} (iii) } \label{sec-pf-z-2}
\begin{proof}[ Proof of Proposition \eqref{prop-bnd-I}(iii)] 
Recall that $I_{2,d,D}$ was defined in \eqref{log-z}, 
\begin{equation} \label{l2-dec} 
\begin{aligned} 
I_{2,d,D} &= -\sum_{i=1}^D\sum_{j \in S_N^1\setminus \{0\}} \hat E^{(a)}\left[ Y_{j\e_i}^{(i)}    \right], 
\end{aligned}
\end{equation}
where $Y_{ j\e_i}^{(i)}$ was defined in \eqref{y-rv}. Recall also that $N(d)$ was 
defined in \eqref{n-d}. Since the expectation on the right-hand side of  
\eqref{l2-dec} is taken over a Gaussian measure, we define the following 
normalizing constant 
 \begin{equation} \label{z-n} 
\begin{aligned} 
C_{N,\beta,d,D} &= \int_{\mathbb{R}^{D \cdot N(d)} }
  \exp\left(-\sum_{i=1}^{D}\sum_{k=1}^{N(d)}
   \frac{(x^{(i)}_k)^2}{2(2\beta\lambda_k)^{-1}}\right) 
   \prod_{i=1}^{D} \prod_{k=1}^{N(d)}dx^{(i)}_k\\
&=  \frac{1}{(2\beta)^{DN(d)/2}}\prod_{k=1}^{N(d)}\frac{1}{\lambda_k^{D/2}}. 
\end{aligned}  
\end{equation}
We refer to Section 1.2 in \cite{muel-neum-2021} for additional in formation about the setup of the GFF measure.

We further introduce the following notation: 
\begin{equation*}
z_{k_1,\dots,k_d}^{(i)} =\frac{(x^{(i)}_{k_1,\dots,k_d})^2}{2(2\beta\lambda_{k_1,\dots,k_d})^{-1}}  , \quad w^{(i)}_{j\e_i} = \frac{(x^{(i)}_{j\e_i} +a\alpha_{j})^2}{2(2\beta\lambda_{j\e_i})^{-1}}, 
\end{equation*}
and 
\begin{equation*}
y_{j\e_i}^{(i)} =    \frac{2a\alpha_{j}x^{(i)}_{j\e_i}+(a\alpha_{j})^{2}}{2(2\beta\lambda_{j\e_i})^{-1}}.
\end{equation*}
Then from \eqref{sums-sums} and \eqref{hat-P-meas} we have 
\begin{equation}  \label{y-calc} 
\begin{aligned}
& \hat E^{(a)}\left[  Y_{j\e_i}^{(i)} \right]  \\
&= \frac{1}{C_{N,\beta,d,D} } \int y_{j\e_i}^{(i)}  \exp\bigg( -\sum_{i=1}^D \Big( \sum_{ (k_1,\dots,k_d) \in S_N^d \setminus \{j\e_i: \, j\in S_N^1\}}z_{k_1,\dots,k_d}^{(i)}  \\
& \qquad    + \sum_{l\in S_N^1 \setminus \{0\}}w^{(i)}_{l\e_i} \Big)\bigg)\prod_{i=1}^{D} \prod_{l=1}^{N(d)}dx^{(i)}_l,
\end{aligned}
\end{equation} 
where $C_{N,\beta,d,D}$ was defined in \eqref{z-n}.  

Since the expected value in \eqref{y-calc} is symmetric with respect to $i$, we can use $i=1$ in what follows in order to ease the notation. We therefore consider
\begin{equation}  \label{y-1-exp} 
\begin{split}
& \hat{E}^{(a)}\left[  Y_{j\e_1}^{(1)} \right]
  = \frac{1}{C_{N,\beta,d,D} } \int y_{j\e_1}^{(1)} \exp(-w^{(1)} _{j\e_1} )   \\
&\quad  \times \int \exp\bigg(- \Big(\sum_{i=1}^D  \sum_{ (k_1,\dots,k_d) \in S_N^d \setminus \{l\e_i: \, l\in S_N^1 \}}z_{k_1,\dots,k_d}^{(i)}  \\
& \quad    + \sum_{i=2}^D\sum_{l\in \{-N,\dots,N\}\setminus \{0\}}w^{(i)}_{l\e_i} +\sum_{l\in S_N^1 \setminus \{0,j\}}w^{(1)}_{l\e_1}    \Big)\bigg)\prod_{i=1}^{D} \prod_{l=1}^{N(d)}dx^{(i)}_l.
\end{split}
\end{equation} 
We notice that we have three types of integrals above, which can be evaluated as follows. 
We have $D((2N+1 )^d - (2N+1))$ integrals of the form 
\begin{equation*} 
\begin{aligned} 
\int_{\mathbb{R}} \exp \big(-z_{k_1,\dots,k_d}^{(i)} \big) dx^{(i)}_{k_1,\dots,k_d}&=
\int_{\mathbb{R}} \exp\left(- \frac{(x^{(i)}_{k_1,\dots,k_d})^2}{2(2\beta\lambda_{k_1,\dots,k_d})^{-1}}  \right) dx^{(i)}_{k_1,\dots,k_d} \\
& =  \sqrt{2\pi}  (2\beta\lambda_{k_1,\dots,k_d})^{-1/2}. 
\end{aligned}
 \end{equation*} 
We have $2N(D-1)+2N-1$ integrals of the form 
\begin{equation*} 
\begin{aligned}  
\int_{\mathbb{R}} \exp\big(-w^{(i)}_{l\e_i} \big)dx^{(i)}_{l\e_i}  &=
\int_{\mathbb{R}} \exp\left(- \frac{(x^{(i)}_{\textbf 0_{i}(l)} +a\alpha_{l})^2}{2(2\beta\lambda_{ l\e_i})^{-1}} \right) dx^{(i)}_{l\e_i} \\
& =  \sqrt{2\pi}  (2\beta\lambda_{l\e_i})^{-1/2},
\end{aligned}
\end{equation*} 
and one integral as follows 
\begin{equation*} 
\begin{aligned}  
& \int y_{j\e_1} \exp(-w^{(1)} _{j\e_1} )dx_{j\e_1}^{(1)} \\ &=\int_{\mathbb{R}}  \frac{2a\alpha_{j}x^{(1)}_{j\e_1}+(a\alpha_{j})^{2}}{2(2\beta\lambda_{j\e_1})^{-1}} \exp\left(-\frac{(x^{(1)}_{j\e_1} +a\alpha_{j})^2}{2(2\beta\lambda_{j\e_1})^{-1}} \right)dx_{j\e_1}^{(1)} \\ 
&= - \sqrt{2\pi} \frac{(a\alpha_{j})^2}{2(2\beta\lambda_{j\e_1})^{-1/2}}.
\end{aligned}
\end{equation*} 
Plugging in all the above integrals into \eqref{y-1-exp} gives 
\begin{equation*} 
\begin{aligned} 
& \hat E^{(a)}\left[  Y_{j\e_1}^{(1)} \right]\\
 &= -  \frac{1}{C_{N,\beta,d,D}} \sqrt{2\pi} \frac{(a\alpha_{j})^2}{2(2\beta\lambda_{j\e_1})^{-1/2}} \\
 &\qquad \times  \prod_{i=1}^D \prod_{ (k_1,\dots,k_d) \in S_N^d \setminus \{l\e_i: \, l\in S_N^1 \}}\sqrt{2\pi}  (2\beta\lambda_{k_1,\dots,k_d})^{-1/2} \\
 &\qquad \times \prod_{i=2}^D \prod_{l\in  S_N^1 \setminus \{0\}}\sqrt{2\pi}  (2\beta\lambda_{{ l\e_i}})^{-1/2} \prod_{l\in S_N^1 \setminus \{0,k\}} \sqrt{2\pi}  (2\beta\lambda_{ l\e_1})^{-1/2} \\
 &=    -  \frac{1}{C_{N,\beta,d,D}} \frac{(2\pi)^{((2N+1)^{d}-1)D/2}(a\alpha_{j})^2\lambda_{ j\e_1}}{2(2\beta)^{((2N+1)^{d}-1)D/2-1}}  \prod_{i=1}^D  \prod_{(k_1,\dots,k_d) \in S_N^d\setminus\{\textbf 0\}}\frac{1}{ \lambda_{k_1,\dots,k_d}^{1/2}}.
\end{aligned}
\end{equation*}
 
Together with \eqref{z-n} we get 
\begin{equation} \label{ir1} 
\begin{aligned}  
  \hat E^{(a)}\left[  Y_{j\e_1}^{(1)} \right] &=- \beta(a\alpha_{j})^2\lambda_{j\e_1}. 
\end{aligned}
\end{equation}

 Plugging  \eqref{ir1} into \eqref{l2-dec} gives 
\begin{equation} \label{i2-fin} 
\begin{aligned} 
I_{2,d,D}&= \beta \sum_{i=1}^D \sum_{j \in S_N^1 \setminus \{0\}}(a\alpha_{j})^2\lambda_{j\e_i}  \\
  &=D\beta a^2  \sum_{j \in S_N^1 \setminus \{0\}}\alpha_{j}^2\lambda_{j\e_1},
\end{aligned}
\end{equation}
where we used the fact that $\lambda_{j\e_i} = \lambda_{j\e_1}$, by the 
symmetry of the eigenvalues (see \eqref{eigen-rel}).

In order to complete the proof we recall Lemma 4.1 from \cite{muel-neum-2021}. 

\begin{lemma}\label{lemma-coef}  There exists a constant $C>0$ not depending on $N$ and $\beta$ such that, 
\begin{equation*}
  \sum_{j \in S_N^1 \setminus \{0\}}\alpha_{j}^2\lambda_{j\e_1} \leq CN^d. 
\end{equation*}
\end{lemma}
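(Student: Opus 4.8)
The plan is to recognise the left-hand side as a Dirichlet energy on the one-dimensional path $S_N^1$. Write $f\colon S_N^1\to\R$ for the function $f(n)=n$. By \eqref{f-coef} we have $\alpha_j=\phi_0^{1-d}\sum_{n=-N}^{N}n\,\phi_j(n)=\phi_0^{1-d}\langle f,\phi_j\rangle$, where $\langle\cdot,\cdot\rangle$ is the usual inner product on $S_N^1$; hence
\begin{equation*}
\sum_{j\in S_N^1\setminus\{0\}}\alpha_j^2\,\lambda_{j\e_1}
 =\phi_0^{2(1-d)}\sum_{j\in S_N^1\setminus\{0\}}\langle f,\phi_j\rangle^2\,\lambda_{j\e_1}.
\end{equation*}
Moreover, by \eqref{eigen-rel} together with $\lambda_0=0$ one has $\lambda_{j\e_1}=\lambda_j+(d-1)\lambda_0=\lambda_j$, the eigenvalue of $\phi_j$ under $\Delta_{N,1,1}$. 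Since $f$ is odd it is orthogonal to the constant eigenfunction $\phi_0$, so $\langle f,\phi_0\rangle=0$ and $f=\sum_{j\neq0}\langle f,\phi_j\rangle\phi_j$.

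Next I would apply the spectral theorem. Writing $-\Delta_{N,1,1}\phi_j=\lambda_j\phi_j$ with $\lambda_j\ge0$ and expanding $f$ in the basis $\{\phi_j\}$, one gets
\begin{equation*}
\sum_{j\in S_N^1\setminus\{0\}}\langle f,\phi_j\rangle^2\,\lambda_j
 =\bigl\langle f,(-\Delta_{N,1,1})f\bigr\rangle
 =\sum_{n\sim m}\bigl(f(n)-f(m)\bigr)^2,
\end{equation*}
the last equality being the standard identification of the Neumann Laplacian with the Dirichlet form that also defines the Hamiltonian $H$ (see Section 1 of \cite{muel-neum-2021}). For $f(n)=n$ each of the $2N$ nearest-neighbour pairs in $\{-N,\dots,N\}$ contributes exactly $1$, so $\bigl\langle f,(-\Delta_{N,1,1})f\bigr\rangle=2N$.

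Finally, since $\phi_0=(2N+1)^{-1/2}$ we have $\phi_0^{2(1-d)}=(2N+1)^{d-1}$, and combining the displays yields
\begin{equation*}
\sum_{j\in S_N^1\setminus\{0\}}\alpha_j^2\,\lambda_{j\e_1}=2N\,(2N+1)^{d-1}\le C N^d,
\end{equation*}
with $C$ independent of $N$ and $\beta$, since nothing in the argument involves $\beta$. The main obstacle, such as it is, is purely bookkeeping: one must make sure the sign and normalisation conventions for $\Delta_{N,1,1}$ and its Dirichlet form match those of \cite{muel-neum-2021} (in particular whether the edge sum runs over ordered or unordered pairs and whether there is a factor of $\tfrac12$), so that the identity $\bigl\langle f,(-\Delta_{N,1,1})f\bigr\rangle=\sum_{n\sim m}|f(n)-f(m)|^2$ is applied with the right constant. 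Once that is pinned down, the remaining content is the trivial computation $\sum_{n\sim m}|f(n)-f(m)|^2=2N$ for $f(n)=n$.
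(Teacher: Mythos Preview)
Your argument is correct. Note, however, that the paper does not actually prove this lemma here; it simply quotes it as Lemma~4.1 from \cite{muel-neum-2021}, so there is no in-paper proof to compare against. Your Dirichlet-form computation---recognising $\sum_{j\ne0}\langle f,\phi_j\rangle^2\lambda_j$ as $\langle f,(-\Delta_{N,1,1})f\rangle$ and evaluating the latter as the edge sum $\sum_{n\sim m}(f(n)-f(m))^2=2N$ for $f(n)=n$---supplies a complete and self-contained proof where the paper offers only a citation. The normalisation caveat you raise is harmless: any fixed constant in the Dirichlet-form identity is absorbed into $C$, and the bound $2N\,(2N+1)^{d-1}\le CN^d$ follows regardless.
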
 
 From Lemma \ref{lemma-coef} and \eqref{i2-fin} we conclude that 
\begin{equation}  
I_{2,d,D} \leq  C\beta a^2  N^d, 
\end{equation}
 which completes the proof of Proposition \ref{prop-bnd-I} part (iii).
\end{proof}

 \section{Large distance tail estimates} \label{sec-large-t}  

Assume first that $d=2$ and $d=1$.  Let $\alpha>0$, then from \eqref{eq:def-Q} we have 
\begin{equation}  \label{q-n-k} 
\begin{aligned} 
&\log Q_N (R_{N,2,1} >  \alpha  N^{4/3}(\log N)^{4/3}) \\
&\leq \log P(R_{N,2,1}>\alpha  N^{4/3}(\log N)^{4/3}) -\log Z_{N,2,1}. 
\end{aligned} 
\end{equation} 
From the proof in Section 5 of \cite{muel-neum-2021}, which uses  standard Gaussian estimates, it follows that for any constant $\kappa (N)>0$ we have  
\begin{equation*}
P(R_{N,2,1}>   \alpha  \beta^{-1/2}  \kappa(N) )  \leq  C_1 \exp\left\{-  c_2  \kappa(N)^2(\log N)^{-2}\right\}. 
\end{equation*} 
where $C_1, c_2>0$ are constants not depending on $N$. 

We therefore get, 
\begin{equation*}
P(R_{N,2,1}>   \alpha  \beta^{-1/2}   N^{4/3}(\log N)^{4/3} )  \leq  C_1 \exp\left\{- c_2 \alpha^2N^{8/3} (\log N)^{2/3}  \right\}. 
\end{equation*}  
 Using this bound together with Proposition \ref{prop-z}(i) and \eqref{q-n-k} we get for all $\alpha \geq 1$, 
\begin{align*} 
&\log Q_N (R_{N,2,1}>  \alpha \beta^{-1/2}  (\beta+\gamma)^{1/2} N^{4/3}(\log N)^{4/3}) \\
 & \leq \log P(R_{N,2,1}> \alpha\beta^{-1/2} (\beta+\gamma)^{1/2} N^{4/3}(\log N)^{4/3}) -\log Z_{N,2,1}  \\
 &\leq- (\beta+\gamma)  N^{8/3} (\log N)^{2/3} \big( c_3    \alpha^2  -c_4  \big). 
 \end{align*} 
We then can choose $\alpha$ to be large enough to get the large distance tail estimate in Theorem \ref{th:main}. 

The proof for $d\geq 3$ and $1\leq D \leq d$ follows similar lines, only now we use the bound 
\begin{equation*}
P(R_{N,d,D}>   \alpha  \beta^{-1/2}  \kappa(N) )  \leq  C_1 \exp\left\{-  c_2  \kappa(N)^2 \right\}.
\end{equation*} 
Note that the $\log$-correction does not appear on the right-hand side due to Proposition \ref{lem-cov}(ii). 

Together with Proposition \ref{prop-z}(ii) we get 
\begin{align*} 
&\log Q_N (R_{N,d,D}>  \alpha \beta^{-1/2}  (\beta+\gamma)^{1/2} N^{\frac{d}{2}+\frac{d-D}{D+2}} ) \\
 & \leq \log P(R_{N,d,D}> \alpha\beta^{-1/2} (\beta+\gamma)^{1/2}  N^{\frac{d}{2}+\frac{d-D}{D+2}} ) -\log Z_{N}  \\
 &\leq- (\beta+\gamma)  N^{d+\frac{2(d-D)}{D+2}}  \big( c_3    \alpha^2  -c_4  \big). 
 \end{align*} 
We then can choose $\alpha$ to be large enough to get the large distance tail estimate in Theorem \ref{th:main}. 

\section{Small distance tail estimates} \label{sec-small} 
Let $\eps>0$, and $\kappa (N)>0$ to be specified later. Then from \eqref{eq:def-Q} we have the following: 
\begin{equation}  \label{d1} 
\begin{split} 
\log &Q_N (R_{N,d,D} <  \eps \kappa(N)) \\
& \leq \log E\left[\exp\left\{-\gamma \int_{ \mathbb{R}^D}\ell_N(\y)^2d\y\right\}\mathbf{1}_{\{R_{N,d,D} <\eps \kappa (N) \}} \right] 
      -\log Z_{N,d,D}.  
\end{split}  
\end{equation} 
Let 
\begin{equation} \label{j-def} 
\tilde J_{N,d,D}:= E\left[\exp\left\{-\gamma\int_{\mathbb{R}^D}\ell_N(\y)^2d\y\mathbf{1}_{\{R_{N,d,D} <\eps \kappa (N) \}}\right\} \right]. 
\end{equation} 
Note that on $\{R_{N,d,D} <\eps\kappa (N) \}$ we have 
\begin{equation} \label{gh1} 
\begin{aligned} 
 \int_{\mathbb{R}^D}\ell_N(\y)^2d\y 
&= 2^d\kappa(N)^{D} \eps^D \int_{[-\eps \kappa(N),\eps \kappa(N)]^{D}}\ell_N(\y)^2 \frac{1}{2^D \kappa(N)^D\eps^D}d\y\\
&\geq 2^D\kappa(N)^D \eps^D \left(\int_{[-\eps \kappa(N),\eps \kappa(N)]^{D}}
\ell_N(\y) \frac{1}{2^D \kappa(N)^D\eps^D}d\y\right)^2 \\
&= \frac{1}{2^D \kappa(N)^D \eps^D} \left(\int_{[-\eps \kappa(N),\eps \kappa(N)]^{D}}\ell_N(\y) d\y\right)^2, 
\end{aligned} 
\end{equation} 
where we used Jensen's inequality. 
Since on $\{R_{N,d,D} <\eps \kappa(N) \}$ we have 
\begin{equation*}
\int_{[-\eps \kappa(N),\eps \kappa(N)]^{D}}\ell_N(\y) d\y= |S_N|= (2N+1)^d, 
\end{equation*}
together with \eqref{gh1} we get that  
\begin{equation} \label{l-lower} 
\int_{\mathbb{R}^D}\ell_N(\y)^2d\y  \geq  \frac{2^{2d-D}N^{2d}\kappa(N)^{-D}}{ \eps^D}.
\end{equation} 
From \eqref{j-def} and \eqref{l-lower} we have 
\begin{equation}  \label{small}
\tilde J \leq \exp\left( -\gamma \frac{2^{2d-D}N^{2d}\kappa(N)^{-D}}{ \eps^D}\right). 
\end{equation}
From \eqref{d1}, \eqref{j-def}, \eqref{small} and Proposition \ref{prop-z}(i) we get for $d=2$ and D=1 (taking  $\kappa(N)=\gamma (\beta +\gamma)^{-1}  N^{4/3}( \log N)^{-2/3}$): 
\begin{align*}
\log &  Q_N (R_{N,2,1} < \eps \gamma (\beta +\gamma)^{-1}  N^{4/3}( \log N)^{-2/3} ) \\
 & \leq -  (\beta +\gamma) \left(   \frac{8 N^{8/3}(\log N)^{2/3}}{ \eps} -  C N^{8/3}(\log N)^{2/3} \right). 
\end{align*} 
By choosing $\eps>0$ small enough it follows that  
\begin{equation*}
\lim_{N\rightarrow \infty}  \log  Q_N (R_{N,2,1} < \eps \gamma (\beta +\gamma)^{-1} N^{4/3}( \log N)^{-2/3}) =0. 
\end{equation*}
Repeating the same steps as in the case where $d=2$ and $D=1$, plugging in   
$$ 
\kappa(N)=\gamma^{1/D} (\beta +\gamma)^{-1/D} N^{\frac{1}{D} \left(d-\frac{2(d-D)}{d-D+2} \right)}
$$ 
to \eqref{small} we get, 
$$
\tilde J \leq e^{ -(\beta +\gamma)\frac{2^{2d-D}N^{d+\frac{2(d-D)}{d-D+2}}}{ \eps^D}}. 
$$
Together with Proposition \ref{prop-z}(ii) this give the following bound for $d\geq 3$ and $1\leq D \leq d$,
\begin{equation*}
\begin{aligned}
& \log  Q_N \left(R_{N,d,D} < \eps \gamma^{1/D} (\beta +\gamma)^{-1/D} N^{\frac{1}{D} \left(d-\frac{2(d-D)}{D+2} \right)} \right)  \\
 &  \leq -(\beta +\gamma) \left(   \frac{N^{d+\frac{2(d-D)}{D+2 }}}{ \eps^D} -  C N^{d+\frac{2(d-D)}{D+2 }}  \right).
 \end{aligned} 
\end{equation*}
Then choosing $\eps>0$ sufficiently small and taking the limit where 
$N\rightarrow \infty$ completes the proof of Theorem \ref{th:main}.


\begin{thebibliography}{10}

\bibitem{BF93}
L.~Balents and D.~S. Fisher.
\newblock Large-{$N$} expansion of $(4-\varepsilon)$-dimensional oriented
  manifolds in random media.
\newblock {\em Phys. Rev. B}, 48(9):5949, 1993.

\bibitem{BDGS12}
R.~Bauerschmidt, H.~Duminil-Copin, J.~Goodman, and G.~Slade.
\newblock Lectures on self-avoiding walks.
\newblock In {\em Probability and statistical physics in two and more
  dimensions}, volume~15 of {\em Clay Math. Proc.}, pages 395--467. Amer. Math.
  Soc., Providence, RI, 2012.

\bibitem{BSTW17}
R.~Bauerschmidt, G.~Slade, A.~Tomberg, and B.~C. Wallace.
\newblock Finite-order correlation length for four-dimensional weakly
  self-avoiding walk and {$|\varphi|^4$} spins.
\newblock {\em Ann. Henri Poincar\'{e}}, 18(2):375--402, 2017.

\bibitem{Bol90}
E.~Bolthausen.
\newblock On self-repellent one-dimensional random walks.
\newblock {\em Probab. Theory Related Fields}, 86(4):423--441, 1990.

\bibitem{Kardar_86}
Y.~Kantor, M.~Kardar, and D.~R. Nelson.
\newblock Statistical mechanics of tethered surfaces.
\newblock {\em Phys. Rev. Lett.}, 57:791--794, Aug 1986.

\bibitem{Kantor_87}
Y.~Kantor, M.~Kardar, and D.~R. Nelson.
\newblock Tethered surfaces: Statics and dynamics.
\newblock {\em Phys. Rev. A}, 35:3056--3071, Apr 1987.

\bibitem{Kardar_87}
M.~Kardar and D.~R. Nelson.
\newblock \ensuremath{\epsilon} expansions for crumpled manifolds.
\newblock {\em Phys. Rev. Lett.}, 58:1289--1292, Mar 1987.

\bibitem{MP92}
M.~Mezard and G~Parisi.
\newblock Manifolds in random media: two extreme cases.
\newblock {\em J. de Phys. I France}, 2(12):2231--2242, 1992.

\bibitem{muel-neum-2021a}
C.~Mueller and E.~Neuman.
\newblock Scaling properties of a moving polymer.
\newblock {\em arXiv:2006.07189}, 2020.

\bibitem{muel-neum-2021}
C.~Mueller and E.~Neuman.
\newblock The effective radius of self repelling elastic manifolds.
\newblock {\em arXiv:2112.13007}, 2021.

\bibitem{Nelson_2004}
D~Nelson, T~Piran, and S~Weinberg.
\newblock {\em Statistical Mechanics of Membranes and Surfaces}.
\newblock WORLD SCIENTIFIC, 2nd edition, 2004.

\end{thebibliography}

\end{document}